\def\titlerunning#1{\gdef\titrun{#1}}
\def\author#1{\gdef\autrun{\def\and{\unskip, }#1}\gdef\@author{#1}}
\def\address#1{{\def\and{\\\hspace*{18pt}}\renewcommand{\thefootnote}{}%
		\footnote {#1}}%
	\markboth{\autrun}{\titrun}}
\def\email#1{e-mail: #1}
\def\keywords#1{\par\medskip
	\noindent\textbf{Keywords.} #1}
\newtheorem{theorem}{Theorem}[section]
\newtheorem{corollary}[theorem]{Corollary}
\newtheorem{proposition}[theorem]{Proposition}
\theoremstyle{definition}
\newtheorem{definition}[theorem]{Definition}
\newtheorem{remark}[theorem]{Remark}
\numberwithin{equation}{section}
\def \C {\mathbb{C}}
\def \R {\mathbb{R}}
\def \R {\mathbb{R}}
\def \a {\alpha }
\def \b {\beta}
\def \De {\Delta}
\def \la {\lambda}
\def \La {\Lambda}
\def\w {\omega}
\def\Om{\Omega}
\begin{document}
\baselineskip=17pt

\titlerunning{On second non-HLC degree of closed symplectic manifold }
\title{On second non-HLC degree of closed symplectic manifold}

\author{Teng Huang}

\date{}

\maketitle

\address{Teng Huang: School of Mathematical Sciences, University of Science and Technology of China; CAS Key Laboratory of Wu Wen-Tsun Mathematics,  University of Science and Technology of China, Hefei, Anhui, 230026, People’s Republic of China; \email{htmath@ustc.edu.cn;htustc@gmail.com}}
\begin{abstract}
In this note, we show that for a closed almost-K\"{a}hler manifold $(X,J)$ with the almost complex structure $J$ satisfies $\dim\ker P_{J}=b_{2}-1$ the space of de Rham harmonic forms is contained in the space of symplectic-Bott-Chern harmonic forms. In particular, suppose that $X$ is four-dimension, if the self-dual Betti number $b^{+}_{2}=1$, then we prove that the second non-HLC degree measures the gap between the de Rham and the symplectic-Bott-Chern harmonic forms. 
\end{abstract}
\keywords{de Rham harmonic forms, symplectic-Bott-Chern harmonic forms}
\section{Introduction}
Let $X$ be a closed $2n$-dimensional smooth manifold and denote by $\Om^{k}(X)$ the space of smooth $k$-forms on $X$. Suppose that $X$ admits a symplectic structure $\w$; then many cohomology groups can be defined on $(X,\w)$. In \cite{TY} Tseng-Yau noticing that the de Rham cohomology is not the appropriate cohomology to talk about symplectic Hodge theory, define a symplectic version of the Bott-Chern and the Aeppli cohomology groups. The symplectic Bott-Chern cohomology groups and the symplectic Aeppli cohomology groups defined respectively as
$$H^{k}_{d+d^{\La}}=\frac{\ker(d+d^{\La})\cap\Om^{k}(X)}{{\rm{Im}}dd^{\La}\cap\Om^{k}(X)},$$
and
$$H^{k}_{dd^{\La}}:=\frac{\ker(dd^{\La})\cap\Om^{k}(X)}{({\rm{Im}}d+{\rm{Im}}d^{\La})\cap\Om^{k}(X)}.$$
Suppose that $J$ is an $\w$-compatible almost complex structure, i.e., $J^{2}=-id$, $\w(J\cdot, J\cdot)=\w(\cdot,\cdot)$, and $g(\cdot,\cdot)=\w(\cdot, J\cdot)$ is a Riemannian metric on $X$. The triple $(\w, J, g)$ is called an almost K\"{a}hler structure on $X$. Notice that any one of the pairs $(\w,J)$, $(J,g)$ or $(g,\w)$ determines the other two. An almost-K\"{a}hler metric $(\w,J,g)$ is K\"{a}hler if and only if $J$ is integrable. 

There are canonical isomorphisms
$$\mathcal{H}^{k}_{d+d^{\La}}(X):=\ker\De_{d+d^{\La}}\cong H^{k}_{d+d^{\La}}(X)$$
and 
$$\mathcal{H}^{k}_{dd^{\La}}(X):=\ker\De_{dd^{\La}}\cong H^{k}_{dd^{\La}}(X),$$
where 
$$\De_{d+d^{\La}}:=(dd^{\La})(dd^{\La})^{\ast}+\la(d^{\ast}d+d^{\La_{\ast}}d^{\La}),$$
$$\De_{dd^{\La}}=(dd^{\La})^{\ast}(dd^{\La})+\la(dd^{\ast}+d^{\La}d^{\La_{\ast}}),$$
are fourth-order elliptic self-adjoint differential operators and $\la>0$. In particular, the symplectic cohomology groups are finite-dimensional vector spaces on a compact symplectic manifold. For $\sharp\in\{d+d^{\La},dd^{\La}\}$, we denote $h^{\bullet}_{\sharp}:=\dim H^{\bullet}_{\sharp}<\infty$ when the manifold $X$ is understood.

In \cite{AT3,TT} Angella and Tomassini, starting from a purely algebraic point of view, introduce on a compact symplectic manifold $(X^{2n},\w)$ the following non-negative integers
$$\De_{s}^{k}(X):=h^{k}_{d+d^{\La}}(X)+h^{k}_{dd^{\La}}(X)-2b_{k}(X)\geq0, \forall\ k\in\mathbb{Z},$$
proved that, similarly to the complex case, their vanishing characterizes the $dd^{\La}$-lemma. As already observed in \cite{AT3} we can write the non-HLC degrees as follows
$$\De_{s}^{k}=2(h^{k}_{d+d^{\La}}-b_{k}), \forall\ k\in\mathbb{Z}.$$
Therefore, one has that for all $k=1,\cdots,n$,
$$b_{k}\leq h^{k}_{d+d^{\La}}$$
on a compact symplectic $2n$-dimensional maifold.  Moreover the equalities 
$$b_{k} =h^{k}_{d+d^{\La}},\ \forall\ k\in\mathbb{Z},$$
hold on a compact symplectic $2n$-dimensional manifold if and only if it satisfies the Hard-Lefschetz condition \cite{TT2}. In fact, by \cite[Corollary 2]{Mat}, \cite[Theorem 0.1]{Yan}, it turns out that the following  conditions are equivalent:\\
(1) $X$ satisfies $dd^{\La}$-lemma,\\
(2) the natural homomorphism $H^{\bullet}_{d+d^{\La}}(X)\rightarrow H^{\bullet}_{dR}(X)$ is actually an isomorphism,\\
(3) every de Rham cohomology class admits a representative being both $d$-closed and $d^{\La}$-closed,\\
(4) the Hard Lefschetz Condition holds on $X$.
 
In this note, we consider the second non-HLC degree on closed almost K\"{a}hler manifold. If the space of de Rham harmonic forms is contained in the space of symplectic-Bott-Chern harmonic forms, i.e., $\mathcal{H}^{2}_{dR}(X)\subset\mathcal{H}^{2}_{d+d^{\La}}(X)$, then identity $b_{2}\leq h^{2}_{d+d^{\La}}$ naturally holds. In the fifth section of \cite{Lej}, Lejmi introduced the differential operator $P_{J}$ on a closed almost K\"{a}hler 4-manifold $(X,J)$ Tan-Wang-Zhou \cite{TWZ2} extended the defined to higher dimensions. We can give a sufficient condition for $\mathcal{H}^{2}_{dR}(X)\subset\mathcal{H}^{2}_{d+d^{\La}}(X)$. Furthermore, in four-dimension case, we prove that $\dim\ker{P}_{J}=b_{2}-1$ is the sufficient and necessary condition for $\mathcal{H}^{2}_{dR}(X)\subset\mathcal{H}^{2}_{d+d^{\La}}(X)$.
\begin{theorem}\label{T1}
Let $(X,J)$ be a $2n$-dimensional closed almost K\"{a}hler manifold, $n\geq2$. If the almost complex structure $J$ satisfies that $\dim\ker{P_{J}}=b^{2}-1$, then $$\mathcal{H}^{2}_{dR}(X)=\mathcal{H}^{+}_{J}(X)\oplus\mathcal{H}^{-}_{J}(X)\subset\mathcal{H}^{2}_{d+d^{\La}}(X).$$
In particular, $b_{2}\leq h^{2}_{d+d^{\La}}$.
\end{theorem}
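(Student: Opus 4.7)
The strategy is to characterize $\H^{2}_{d+d^{\La}}(X)$ by three explicit equations and then verify them on each half of a $J$-invariant/anti-invariant splitting of $\H^{2}_{dR}(X)$. Pairing $\De_{d+d^{\La}}\a$ with $\a$ and integrating by parts yields
\[
\langle \De_{d+d^{\La}}\a,\a\rangle \;=\; \|(dd^{\La})^{\ast}\a\|^{2} + \la\|d\a\|^{2} + \la\|d^{\La}\a\|^{2},
\]
so $\a\in\H^{2}_{d+d^{\La}}(X)$ if and only if $d\a=0$, $d^{\La}\a=0$, and $(dd^{\La})^{\ast}\a=0$. These three equalities are the target for every de Rham harmonic 2-form.

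Next I would establish the decomposition $\H^{2}_{dR}(X)=\H^{+}_{J}(X)\oplus\H^{-}_{J}(X)$, where $\H^{\pm}_{J}(X)$ denote the de Rham harmonic 2-forms that are $J$-invariant (resp.\ $J$-anti-invariant). Since $\w$ is $d$-closed and $\ast\w = \w^{n-1}/(n-1)!$ is closed as well, one has $d^{\ast}\w=0$ and therefore $\R\w\subset\H^{+}_{J}(X)$. Interpreting $\ker P_{J}$ as $\H^{-}_{J}(X)$ in the Lejmi/Tan-Wang-Zhou framework, the hypothesis $\dim\ker P_{J}=b_{2}-1$ combined with $\w\in\H^{+}_{J}(X)$ and the $L^{2}$-orthogonality of $\H^{+}_{J}(X)$ and $\H^{-}_{J}(X)$ inside $\H^{2}_{dR}(X)$ forces $\dim\H^{+}_{J}(X)=1$, so $\H^{+}_{J}(X)=\R\w$, and the direct sum then has total dimension $b_{2}$, yielding the claimed splitting.

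Finally I would verify the three symplectic-Bott-Chern equations on each summand. For the $J$-invariant piece $c\w$: since $\La\w=n$ is constant, $d^{\La}\w = d\La\w - \La d\w = 0$; moreover $(dd^{\La})^{\ast} = (d^{\La})^{\ast}d^{\ast}$ vanishes on $\w$ because $d^{\ast}\w=0$. For a $J$-anti-invariant $\a^{-}$: the anti-invariance forces primitivity $\La\a^{-}=0$, hence $d^{\La}\a^{-} = d\La\a^{-} - \La d\a^{-}=0$ as $\a^{-}$ is closed, and $(dd^{\La})^{\ast}\a^{-} = (d^{\La})^{\ast}d^{\ast}\a^{-}=0$ as $\a^{-}$ is co-closed. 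Hence both summands lie in $\H^{2}_{d+d^{\La}}(X)$, giving the containment and in particular $b_{2}\leq h^{2}_{d+d^{\La}}$.

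The main obstacle I expect is the middle step: rigorously upgrading $\dim\ker P_{J}=b_{2}-1$ to the full splitting $\H^{2}_{dR}(X)=\H^{+}_{J}(X)\oplus\H^{-}_{J}(X)$ when $2n\geq 4$. In four dimensions this is a consequence of the Draghici-Li-Zhang theorem together with Lejmi's identification $\ker P_{J}=\H^{-}_{J}(X)$, but in higher dimensions the type decomposition of the second cohomology is subtle and must be extracted from the spectral theory of the Tan-Wang-Zhou extension of $P_{J}$; everything else reduces to the algebraic computations already sketched above.
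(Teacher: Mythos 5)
Your first and last steps are sound and essentially match the paper's: the characterization $\mathcal{H}^{2}_{d+d^{\Lambda}}=\ker d\cap\ker d^{\Lambda}\cap\ker(dd^{\Lambda})^{\ast}\cap\Omega^{2}$ obtained by integrating $\langle\Delta_{d+d^{\Lambda}}\alpha,\alpha\rangle$ by parts is exactly what the paper uses, and the verification that $c\omega$ and closed, co-closed primitive forms satisfy all three equations (via $d^{\Lambda}=[d,\Lambda]$ and $(dd^{\Lambda})^{\ast}=d^{\Lambda_{\ast}}d^{\ast}$) is the right computation. The genuine gap is in your middle step. The operator $P_{J}$ of Lejmi and Tan--Wang--Zhou is defined on all primitive $2$-forms $P^{2}=\ker\Lambda\cap\Omega^{2}$, not on $\Omega^{-}_{J}$, and $\ker P_{J}$ is not $\mathcal{H}^{-}_{J}(X)$: it contains at least every primitive harmonic $2$-form. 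Consequently your deduction that $\dim\mathcal{H}^{+}_{J}(X)=1$, hence $\mathcal{H}^{+}_{J}(X)=\mathbb{R}\omega$, is false. On any compact K\"{a}hler manifold one has $\dim\ker P_{J}=b_{2}-1$, while $\mathcal{H}^{+}_{J}$ is the space of real harmonic $(1,1)$-forms, of dimension $h^{1,1}$ (e.g.\ $20$ on a K3 surface, where $b_{2}-1=21=h^{1,1}+h^{2,0}+h^{0,2}-1$). Even in dimension four, Theorem \ref{T2}(2) records the correct translation of the hypothesis as $h^{-}_{J}=b^{+}_{2}-1$, not $h^{-}_{J}=b_{2}-1$.

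Because of this misidentification, your final verification only covers $\mathbb{R}\omega\oplus\mathcal{H}^{-}_{J}$ and misses the primitive $J$-invariant harmonic forms. The paper's route is: invoke Tan--Wang--Zhou (quoted as Theorem 3.1) to get $H^{2}_{dR}=\mathbb{R}\omega\oplus H^{+}_{J,0}\oplus H^{-}_{J}$ from the hypothesis $\dim\ker P_{J}=b_{2}-1$, and then prove (Proposition \ref{P5}) that every harmonic $J$-invariant $2$-form $\alpha=f\omega+\alpha^{1,1}_{0}$ has $df=0$ and $d\alpha^{1,1}_{0}=0$, by combining $d\alpha=0$ with $d\ast\alpha=0$ through the Weil formula and the bijectivity of $L^{n-k}\colon\Omega^{k}\to\Omega^{2n-k}$; a dimension count (Proposition \ref{P1}) then yields $\mathcal{H}^{2}_{dR}=\mathcal{H}^{+}_{J}\oplus\mathcal{H}^{-}_{J}$ with $\mathcal{H}^{+}_{J}=\mathbb{R}\omega\oplus\mathcal{H}^{+}_{J,0}$. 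With that structural statement in hand, your three-equation check goes through verbatim on each of the three summands, since each is either a constant multiple of $\omega$ or a closed, co-closed primitive form; so the repair is localized entirely to the middle step.
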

\begin{remark}
Tomassini \cite{Tom} proved that if every class in $H^{+}_{J}(X)$ has a $J$-invariant harmonic representative and the almost complex $J$ is $C^{\infty}$-pure and full, then $\mathcal{H}_{dR}^{2}\subset\mathcal{H}^{2}_{d+d^{\La}}$. In fact, if every class in $H^{+}_{J}(X)$ has a $J$-invariant harmonic representative, we then get  $\dim H^{+}_{J}(X)\leq \dim\mathcal{H}^{+}_{J}(X)$.  Therefore $H^{+}_{J}(X)\cong \mathcal{H}^{+}_{J}(X)$ since $\mathcal{H}^{+}_{J}(X)\subset H^{+}_{J}(X)$. If $J$ is also $C^{\infty}$-pure and full, i.e., $H^{2}_{dR}(X)=H^{+}_{J}(X)\oplus H^{-}_{J}(X)$, then $\mathcal{H}^{2}_{dR}(X)=\mathcal{H}^{+}_{J}(X)\oplus\mathcal{H}^{-}_{J}(X)$. Following Theorem \ref{T1}, we also obtain the result proved by Tomassini.
\end{remark}
\begin{corollary}(\cite{Tom})
Let $(X,J)$ be a $2n$-dimensional closed almost K\"{a}hler manifold, $n\geq2$, suppose that $J$ is $C^{\infty}$-pure and full. Assume that every class in $H^{+}_{J}(X)$ has a $J$-invariant harmonic representative. Then
$$\mathcal{H}^{2}_{dR}(X)\subset\mathcal{H}^{2}_{d+d^{\La}}(X).$$
In particular, $b_{2}\leq h^{2}_{d+d^{\La}}$.
\end{corollary}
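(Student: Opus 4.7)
The plan is to verify the hypothesis $\dim\ker P_{J}=b_{2}-1$ of Theorem \ref{T1} under the assumptions of the corollary, and then apply the theorem directly. This mirrors the discussion in the Remark above and proceeds in three steps.

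First, I would show that $\mathcal{H}^{+}_{J}(X)\cong H^{+}_{J}(X)$. The natural map from $\mathcal{H}^{+}_{J}(X)$ to $H^{+}_{J}(X)$ sending a $J$-invariant harmonic $2$-form to its de Rham class is injective by standard Hodge theory; the assumption that every class in $H^{+}_{J}(X)$ admits a $J$-invariant harmonic representative makes it surjective. The analogous inclusion $\mathcal{H}^{-}_{J}(X)\hookrightarrow H^{-}_{J}(X)$ is automatically injective.

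Next, I would combine the identification just obtained with the $C^{\infty}$-pure-and-full condition $H^{2}_{dR}(X)=H^{+}_{J}(X)\oplus H^{-}_{J}(X)$ to deduce the harmonic splitting
\[
\mathcal{H}^{2}_{dR}(X)=\mathcal{H}^{+}_{J}(X)\oplus\mathcal{H}^{-}_{J}(X).
\]
The two summands sit inside $\mathcal{H}^{2}_{dR}(X)$ and intersect trivially because they project to complementary subspaces of $H^{2}_{dR}(X)$. A dimension count using $\dim\mathcal{H}^{+}_{J}=\dim H^{+}_{J}$, $\dim\mathcal{H}^{-}_{J}\leq\dim H^{-}_{J}$, and $\dim\mathcal{H}^{2}_{dR}=b_{2}$ then forces equality throughout, yielding the splitting and incidentally showing that $\mathcal{H}^{-}_{J}(X)\cong H^{-}_{J}(X)$.

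The final step is to convert this harmonic splitting into the hypothesis of Theorem \ref{T1}. By the construction of $P_{J}$ in \cite{Lej, TWZ2}, the kernel of $P_{J}$ captures precisely the harmonic $J$-invariant and $J$-anti-invariant $2$-forms after quotienting by the distinguished line $\R\omega\subset\mathcal{H}^{+}_{J}$, so the splitting above gives
\[
\dim\ker P_{J}=(\dim\mathcal{H}^{+}_{J}-1)+\dim\mathcal{H}^{-}_{J}=b_{2}-1.
\]
Theorem \ref{T1} then applies and yields both $\mathcal{H}^{2}_{dR}(X)\subset\mathcal{H}^{2}_{d+d^{\La}}(X)$ and the dimension inequality $b_{2}\leq h^{2}_{d+d^{\La}}$. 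I expect the main technical point to be this last identification: one must carefully match the analytic definition of $P_{J}$ with the geometric splitting of harmonic $2$-forms, confirming in particular that the $\R\omega$ contribution is quotiented out in exactly the correct way so that the count yields $b_{2}-1$ rather than $b_{2}$.
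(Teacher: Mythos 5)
Your route is the one the paper itself follows in the Remark preceding this corollary: identify $\mathcal{H}^{+}_{J}(X)$ with $H^{+}_{J}(X)$ via the harmonic-representative hypothesis, deduce the harmonic splitting $\mathcal{H}^{2}_{dR}(X)=\mathcal{H}^{+}_{J}(X)\oplus\mathcal{H}^{-}_{J}(X)$ from $C^{\infty}$-pureness-and-fullness, and then feed this into Theorem \ref{T1} through the equivalence of Proposition \ref{P1} (your third step is exactly the content of that proposition, so you need not re-derive the relation between $\ker P_{J}$ and the primitive harmonic $2$-forms).

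However, your second step has a genuine gap. From $\dim\mathcal{H}^{+}_{J}=h^{+}_{J}$, $\dim\mathcal{H}^{-}_{J}\leq h^{-}_{J}$ and $\dim\mathcal{H}^{2}_{dR}=b_{2}=h^{+}_{J}+h^{-}_{J}$ you only obtain the upper bound $\dim\bigl(\mathcal{H}^{+}_{J}\oplus\mathcal{H}^{-}_{J}\bigr)\leq b_{2}$, and an upper bound cannot ``force equality throughout'': if $\dim\mathcal{H}^{-}_{J}<h^{-}_{J}$ the sum would be a proper subspace of $\mathcal{H}^{2}_{dR}$ and the splitting would fail. What is missing is the reverse inequality $\dim\mathcal{H}^{-}_{J}\geq h^{-}_{J}$, i.e., that every class in $H^{-}_{J}(X)$ admits a harmonic $J$-anti-invariant representative. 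This is true on a closed almost-K\"ahler manifold, but it is a fact that must be proved, not a formal consequence of your inequalities: a $J$-anti-invariant $2$-form $\gamma$ is of type $(2,0)+(0,2)$, hence primitive, so the Weil identity gives $\ast\gamma=\frac{\omega^{n-2}}{(n-2)!}\wedge\gamma$, and therefore $d\gamma=0$ forces $d\ast\gamma=0$ because $d\omega=0$. Thus every closed $J$-anti-invariant form is already harmonic, $\mathcal{Z}^{-}_{J}=\mathcal{H}^{-}_{J}$, exact such forms vanish, and the natural map $\mathcal{H}^{-}_{J}\to H^{-}_{J}$ is an isomorphism (in dimension four this is the familiar statement that closed anti-invariant forms are self-dual, hence harmonic). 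Once this ingredient is inserted, your dimension count closes and the remainder of the proposal coincides with the paper's argument.
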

Suppose $(X,J)$ is a $4$-dimensional closed almost K\"{a}hler manifold. Then the Hodge star $\ast$ gives the well-known self-dual, anti-self-dual decomposition of $2$-forms:
\begin{equation}\nonumber
\Om^{2}=\Om^{+}_{g}\oplus\Om^{-}_{g},\ \a=\a^{+}_{g}+\a^{-}_{g}.
\end{equation}
Let $b_{2}$ be the second Betti number, and $b_{2}^{\pm}$ be the self-dual, respectively, anti-self-dual Betti number of the $4$-dimensional $X$.
\begin{theorem}\label{T2}
Let $(X,J)$ be a four-dimensional closed almost K\"{a}hler manifold. The following conditions are equivalent:\\
(1) $\mathcal{H}_{dR}^{2}(X)=\mathcal{H}^{+}_{J}(X)\oplus\mathcal{H}^{-}_{J}(X)$,\\
(2) $\dim\ker{P_{J}}=b_{2}-1$, i.e., $h^{-}_{J}=b^{+}_{2}-1$,\\
(3) $\mathcal{H}^{2}_{dR}(X)\subset\mathcal{H}^{2}_{d+d^{\La}}(X)$.
\end{theorem}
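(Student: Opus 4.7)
The plan is to close the cycle $(2)\Rightarrow(3)\Rightarrow(1)\Rightarrow(2)$. The arrow $(2)\Rightarrow(3)$, together with the inclusion $\mathcal{H}^{+}_{J}(X)\oplus\mathcal{H}^{-}_{J}(X)\subseteq\mathcal{H}^{2}_{dR}(X)$ needed for the equality in (1), is immediate from Theorem \ref{T1} specialised to $n=2$, once we use the equivalent formulation $h^{-}_{J}=b^{+}_{2}-1$ of (2) recorded in the statement.

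For $(3)\Rightarrow(1)$, I would exploit the pointwise orthogonal decomposition
$$\Om^{2}(X)=\Om^{+}_{g}\oplus\Om^{-}_{g},\qquad \Om^{+}_{g}=\R\w\oplus\Om^{-}_{J},$$
to write any 2-form as $\alpha=f\w+\beta+\gamma$ with $f\in C^{\infty}(X)$, $\beta\in\Om^{-}_{J}$ and $\gamma\in\Om^{-}_{g}$. For $\alpha\in\mathcal{H}^{2}_{dR}(X)$, the classical four-dimensional argument (combine $d\alpha=0$ with $d\ast\alpha=0$ using $\ast\alpha^{\pm}_{g}=\pm\alpha^{\pm}_{g}$) shows that $\alpha^{+}_{g}=f\w+\beta$ and $\alpha^{-}_{g}=\gamma$ are themselves de Rham harmonic. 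Hypothesis (3) gives $\alpha\in\mathcal{H}^{2}_{d+d^{\La}}(X)$, and I would observe that for de Rham harmonic $\alpha$ the third requirement $(dd^{\La})^{\ast}\alpha=(d^{\La})^{\ast}d^{\ast}\alpha=0$ is automatic, so the only genuinely new condition is $d^{\La}\alpha=0$. Using $d^{\La}=d\La-\La d$, $d\alpha=0$, and $\La\alpha=2f$ (because $\beta$ and $\gamma$ are pointwise $\w$-orthogonal), this collapses to $2\,df=d^{\La}\alpha=0$, forcing $f$ to be constant. Consequently $f\w+\gamma\in\Om^{+}_{J}$ and $\beta\in\Om^{-}_{J}$ are both harmonic, so $\alpha=(f\w+\gamma)+\beta\in\mathcal{H}^{+}_{J}(X)\oplus\mathcal{H}^{-}_{J}(X)$, which is (1).

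For $(1)\Rightarrow(2)$, I would pick $\alpha\in\mathcal{H}^{+}_{g}$, write $\alpha=f\w+\beta$ with $\beta\in\Om^{-}_{J}$, and note that under (1) the unique $J$-invariant component of $\alpha$, namely $f\w$, must itself be de Rham harmonic. Thus $d(f\w)=df\wedge\w=0$, and the four-dimensional Lefschetz isomorphism $L=\w\wedge\cdot\colon\Om^{1}\xrightarrow{\sim}\Om^{3}$ forces $df=0$, so $f$ is constant and $\beta\in\mathcal{H}^{-}_{J}$. Hence $\mathcal{H}^{+}_{g}=\R\w\oplus\mathcal{H}^{-}_{J}$, giving $b^{+}_{2}=1+h^{-}_{J}$, which is exactly condition (2).

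The main delicate point is the $d^{\La}$-computation inside $(3)\Rightarrow(1)$: one has to keep the three decompositions ($\pm_{g}$, $\pm_{J}$, and $\R\w\oplus\Om^{-}_{J}\oplus\Om^{-}_{g}$) carefully aligned, identify $\La\alpha$ with (twice) the coefficient of $\w$ via pointwise $\w$-orthogonality of the remaining components, and reduce Bott--Chern harmonicity for de Rham harmonic forms to the single equation $d^{\La}\alpha=0$. Once that reduction is performed, both remaining implications are driven purely by the $4$-dimensional Hodge decomposition and by the Lefschetz isomorphism $L\colon\Om^{1}\xrightarrow{\sim}\Om^{3}$.
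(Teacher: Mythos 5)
Your proof is correct, and the core computations coincide with the paper's, but the logical arrangement is genuinely different: you close the single cycle $(2)\Rightarrow(3)\Rightarrow(1)\Rightarrow(2)$, outsourcing $(2)\Rightarrow(3)$ to Theorem \ref{T1}, whereas the paper obtains $(1)\Leftrightarrow(2)$ from Proposition \ref{P1} and then proves $(1)\Rightarrow(3)$ and $(3)\Rightarrow(2)$ separately. In substance, your $(3)\Rightarrow(1)$ and $(1)\Rightarrow(2)$ together repackage the paper's $(3)\Rightarrow(2)$: both reduce to writing a self-dual harmonic form as $f\w+\gamma^{-}_{J}$ and forcing $df=0$. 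The paper does this by computing $d^{\La}\a=-\ast J^{-1}dJ\ast\a$ and playing $d\a=0$ against $dJ\a=0$; you instead use $d^{\La}=[d,\La]$ together with the pointwise identity $\La\a=2f$, which is slightly cleaner and is essentially the computation the paper records only in its higher-dimensional remark. Your version buys a little more: the $(3)\Rightarrow(1)$ step handles an arbitrary de Rham harmonic form via the three-way decomposition $\R\w\oplus\Om^{-}_{J}\oplus\Om^{-}_{g}$ rather than restricting attention to $\mathcal{H}^{+}_{g}$, and your $(1)\Rightarrow(2)$ isolates the purely Lefschetz-theoretic part of the argument. One point worth making explicit at the end of $(1)\Rightarrow(2)$ (the paper does so): to pass from $\mathcal{H}^{+}_{g}=\R\w\oplus\mathcal{H}^{-}_{J}$ to $h^{-}_{J}=b^{+}_{2}-1$ you need $\dim\mathcal{H}^{-}_{J}=h^{-}_{J}$, which holds in dimension four because a closed $J$-anti-invariant form is self-dual, hence harmonic, and a nonzero harmonic form cannot be exact.
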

By Hodge theory, the Hodge star $\ast_{g}$ induces cohomology decomposition by the metric $g$:
$$H^{2}(X,\mathbb{R})\cong\mathcal{H}^{2}_{dR}(X,\mathbb{R})=\mathcal{H}^{+}_{g}\oplus\mathcal{H}^{-}_{g}.$$
It's easy to see that $H^{-}_{J}\subset\mathcal{H}^{+}_{g}$ and $\mathcal{H}^{-}_{g}\subset H^{+}_{J}$. In \cite[Proposition 3.1]{DLZ}, they proved that if $J$ is  almost K\"{a}hler, then
$h^{+}_{J}\geq b_{2}^{-}+1$, $h^{-}_{J}\leq b_{2}^{+}-1$. When $b^{+}_{1}=0$, we have $h^{-}_{J}=0$.
\begin{corollary}
	Let $(X,J)$ be a four-dimensional closed almost K\"{a}hler manifold. If $b^{+}_{2}(X)=1$, then
	$$\mathcal{H}^{2}_{dR}(X)=\mathbb{R}\w\oplus\mathcal{H}^{-}_{g}(X)\subset\mathcal{H}^{2}_{d+d^{\La}}(X).$$ 
	In particular, $b_{2}\leq h^{2}_{d+d^{\La}}$.
\end{corollary}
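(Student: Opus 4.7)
The plan is to deduce this corollary directly from Theorem \ref{T2}. The key observation is that the hypothesis $b_{2}^{+}(X)=1$, combined with the inequality $h^{-}_{J}\leq b_{2}^{+}-1$ from \cite[Proposition 3.1]{DLZ}, forces $h^{-}_{J}=0$. This is precisely condition (2) of Theorem \ref{T2} (since $\dim\ker P_{J}=b_{2}-1$ is equivalent to $h^{-}_{J}=b_{2}^{+}-1$). Thus the equivalent condition (3) gives $\mathcal{H}^{2}_{dR}(X)\subset\mathcal{H}^{2}_{d+d^{\La}}(X)$, and the particular statement $b_{2}\leq h^{2}_{d+d^{\La}}$ follows.

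The remaining point is to identify $\mathcal{H}^{2}_{dR}(X)$ with $\mathbb{R}\w\oplus\mathcal{H}^{-}_{g}(X)$ in the form demanded by the corollary. First, since $b_{2}^{+}=1$ and the almost K\"{a}hler form $\w$ is a self-dual harmonic $2$-form, the metric Hodge decomposition reads $\mathcal{H}^{2}_{dR}(X)=\mathcal{H}^{+}_{g}\oplus\mathcal{H}^{-}_{g}=\mathbb{R}\w\oplus\mathcal{H}^{-}_{g}(X)$. This takes care of the equality $\mathcal{H}^{2}_{dR}(X)=\mathbb{R}\w\oplus\mathcal{H}^{-}_{g}(X)$ at the level of sets; the inclusion into $\mathcal{H}^{2}_{d+d^{\La}}(X)$ is then the previous paragraph.

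If one wishes to connect this decomposition with the $J$-invariant/anti-invariant one appearing in Theorem \ref{T2}, I would note that $\w\in\mathcal{H}^{+}_{J}$ (since $\w$ is $J$-invariant and harmonic) and that the inclusion $\mathcal{H}^{-}_{g}\subset H^{+}_{J}$ recorded in the introduction upgrades, via uniqueness of harmonic representatives, to $\mathcal{H}^{-}_{g}\subset\mathcal{H}^{+}_{J}$, so that $\mathbb{R}\w\oplus\mathcal{H}^{-}_{g}\subset\mathcal{H}^{+}_{J}$. Since $h^{-}_{J}=0$ forces $\mathcal{H}^{-}_{J}=0$, Theorem \ref{T2}(1) yields $\mathcal{H}^{2}_{dR}=\mathcal{H}^{+}_{J}$, and a dimension count ($\dim\mathcal{H}^{+}_{J}=b_{2}=1+b_{2}^{-}=\dim(\mathbb{R}\w\oplus\mathcal{H}^{-}_{g})$) promotes the inclusion to equality.

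There is essentially no substantive obstacle: the content of the corollary is entirely encoded in Theorem \ref{T2} together with the bound $h^{-}_{J}\leq b_{2}^{+}-1$. The only cosmetic care needed is to match the two decompositions of $\mathcal{H}^{2}_{dR}(X)$ (metric self-dual/anti-self-dual versus $J$-invariant/anti-invariant), which is transparent once $h^{-}_{J}=0$ collapses the second decomposition to a single summand.
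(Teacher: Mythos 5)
Your proposal is correct and follows essentially the same route the paper intends: the bound $h^{-}_{J}\leq b_{2}^{+}-1$ from \cite[Proposition 3.1]{DLZ} forces $h^{-}_{J}=0$ when $b_{2}^{+}=1$, which is condition (2) of Theorem \ref{T2}, and the identification $\mathcal{H}^{+}_{g}=\mathbb{R}\w$ comes from $\w$ being a nonzero self-dual harmonic form. Your optional third paragraph is unnecessary (and the phrase ``uniqueness of harmonic representatives'' is not the right justification there --- the relevant fact is the pointwise inclusion $\Om^{-}_{g}\subset\Om^{+}_{J}$), but the main argument is exactly the paper's.
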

\section{Definitions and Preliminaries}
We recall some definitions and results on the differential forms for almost complex and almost Hermitian manifolds. Let $X$ be a $2n$-dimensional manifold (without boundary) and $J$ be a smooth almost complex structure on $X$. The space  $\Om^{k}(X)$ of real smooth differential $k$-forms has a type decomposition:
$$\Om^{k}(X)=\bigoplus_{p+q=k}\Om^{p,q}_{J}(X),$$
where 
$$\Om^{p,q}_{J}(X)=\{\a\in\Om^{p,q}_{J}(X,\C)\oplus\Om^{q,p}_{J}(X,\C):\a=\bar{\a}\}.$$
For a finite set $S$ of pairs of integers, let
$$\mathcal{Z}^{S}_{J}=\bigoplus_{(p,q)\in S}\mathcal{Z}^{p,q}_{J},\ \mathcal{B}^{S}_{J}=\bigoplus_{(p,q)\in S}\mathcal{B}^{p,q}_{J},$$
where 
$$\mathcal{Z}^{p,q}_{J}:=\{\a\in\Om_{J}^{(p,q),(q,p)}:d\a=0\}$$ 
and 
$$\mathcal{B}^{p,q}_{J}:=\{\b\in\Om_{J}^{(p,q),(q,p)}:there\ exists\ \gamma\ such\ that\ \b=d\gamma\}.$$
Denoting by $\mathcal{B}$ the space of $d$-exact forms, we have that
$$\frac{\mathcal{Z}^{S}_{J}}{\mathcal{B}^{S}_{J}}=\frac{\mathcal{Z}^{S}_{J}}{\mathcal{B}\cap\mathcal{Z}^{S}_{J}}= \frac{\mathcal{Z}^{S}_{J}}{\mathcal{B}}.$$
Here, there is a natural inclusion
$$\rho_{S}:\frac{\mathcal{Z}^{S}_{J}}{\mathcal{B}^{S}_{J}}\rightarrow\frac{\mathcal{Z}}{\mathcal{B}}.$$
As in \cite{LZ}, we will write $\rho_{S}({\mathcal{Z}^{S}_{J}}/{\mathcal{B}^{S}_{J}})$ simply as ${\mathcal{Z}^{S}_{J}}/{\mathcal{B}^{S}_{J}}$ and we may define the cohomology spaces
$$H^{S}_{J}(X):=\{[\a]:\a\in\mathcal{Z}^{S}_{J} \}=\frac{\mathcal{Z}^{S}_{J}}{\mathcal{B}},$$
In particular, there is a natural inclusion
$$H^{1,1}_{J}(X)+H^{(2,0),(0,2)}_{J}(X)\subset H^{2}(X),$$
but the sum can be neither direct nor equal to $H^{2}_{dR}(X,\R)$.
The almost complex structure $J$ acts on the space $\Om^{2}$ of smooth $2$-forms on $X$ as an involution by
\begin{equation}\label{E9}
\a\longmapsto\a(J\cdot,J\cdot),\ \a\in\Om^{2}(X).
\end{equation}
This gives the $J$-invariant, $J$-anti-invariant decomposition of $2$-forms
\begin{equation}\label{E1}
\Om^{2}=\Om^{+}_{J}\oplus\Om^{-}_{J},\ \a=\a^{+}_{J}+\a^{-}_{J}.
\end{equation}
Specifically, if $k=2$, $J$ acts on $\Om^{2}(X)$ as (\ref{E9}) and decomposes it into the topological direct sum of the invariant part $\Om^{+}_{J}$ and the anti-invariant part $\Om^{-}_{J}$. In this case, the two decompositions are related in the following way:
\begin{equation*}
\begin{split}
&\Om^{+}_{J}(X)=\Om^{1,1}_{J}(X,\R):=\Om_{J}^{1,1}(X,\C)\cap\Om^{2}(X), \\
&\Om^{-}_{J}(X)=\Om_{J}^{(2,0),(0,2)}(X,\R):=(\Om_{J}^{2,0}(X,\C)\oplus\Om_{J}^{0,2}(X,\C))\cap\Om^{2}(X). \\
\end{split}
\end{equation*}
We also use the notation $\mathcal{Z}^{2}$ for the space of closed $2$-forms on $X$ and $\mathcal{Z}^{\pm}_{J}=\mathcal{Z}^{2}\cap\Om^{\pm}_{J}$ for the corresponding projections.  Define the $J$-invariant, $J$-anti-invariant cohomology subgroups $H_{J}^{\pm}$ by \cite{LZ}
$$H_{J}^{\pm}=\{\mathfrak{a}\in H^{2}(X,\R):\exists\a\in\mathcal{Z}_{J}^{\pm}\ such\ that\ [\a]=\mathfrak{a}\}.$$
Therefore we recall the following definition
\begin{definition}(\cite[Definition 2.2, 2.3, Lemma 2.2]{LZ})
	An almost complex structure $J$ on a closed symplectic manifold $X$ is called
	-- $C^{\infty}$-pure if  $$H^{+}_{J}(X)\cap H^{-}_{J}(X)=\{0\},$$
	-- $C^{\infty}$-full if $$H_{dR}^{2}(X)=H_{J}^{+}(X)\cap H^{-}_{J}(X),$$
	-- $C^{\infty}$-pure and full if $$H_{dR}^{2}(X)=H_{J}^{+}(X)\oplus H^{-}_{J}(X).$$
\end{definition}
Dr\v{a}ghici-Li-Zhang \cite{DLZ} have proved that any closed four-dimensional manifold endowed with an almost-complex structure $J$ satisfies the decomposition $H^{2}_{dR}(X;\R)=H^{+}_{J}(X;\R)\oplus H^{-}_{J}(X;\R)$ which can be regarded as a Hodge decomposition for non-K\"{a}hler $4$-manifolds. This decomposition does not hold true in higher dimension \cite{AT1,AT2,FT}. The decomposition of $H^{2}_{dR}(X,\R)$ is known to be ture for integrable almost structures $J$ that admit compatible K\"{a}hler metrics on compact manifolds of any dimension. In this case, this is nothing but the classical real Dolbeault decomposition of $H^{2}(X,\R)$ \cite{ATZ,DLZ,FT,HMT,LZ}. In \cite{DLZ2}, they made a conjecture about the dimension $h_{J}^{-}$ of $H^{-}_{J}$ on a compact $4$-manifold which asserts that $h_{J}^{-}$ vanishes for $4$-manifolds for generic almost complex structures $J$. They also proved this conjecture for $4$-manifolds with $b_{2}^{+}=1$ \cite[Theorem 3.1]{DLZ2}. Tan-Wang-Zhang-Zhu confirmed the conjecture completely by using $g$-compatible almost complex structures \cite[Theorem 1.1]{TWZZ}.
\section{Proof of main results}
The Lefschetz operator  $L:\Om^{k}(X)\rightarrow\Om^{k+2}(X)$ defined by
$$L(\a)=\w\wedge\a.$$
It has adjoint $\La=\ast^{-1}L\ast$. There is a Lefschetz decomposition on complex $k$-forms
$$\Om^{k}(X)=\bigoplus_{r\geq0}L^{r}P^{k-2r},$$
where $P^{\bullet}=\ker{\La}\cap\Om^{\bullet}(X)$. 

We consider the following second order linear differential operator on $P^{2}:=\ker\La\cap\Om^{2}$,
\begin{equation*}
\begin{split}
P_{J}:&P^{2}\rightarrow P^{2}\\
&\psi\mapsto\De_{d}\psi-\frac{1}{n}g(\De_{d}\psi,\w)\w.\\
\end{split}
\end{equation*}
where $\De_{d}$ is the Riemannian Laplacian with respect to the metric $g(\cdot, \cdot)=\w(\cdot, J\cdot)$	(here we use the convention $g(\w,\w)= n$). By studying Lejmi's operator $P_{J}$ \cite{Lej}, Tan-Wang-Zhou \cite{TWZ2} proved that $J$ is $C^{\infty}$-pure and full when $\dim (\ker P_{J})=b_{2}-1$. 
\begin{theorem}(\cite[Theorem 2.5]{TWZ2})
	Suppose that $(X,g,J,\w)$ is a closed almost K\"{a}hler $2n$-manifold, $n\geq2$. If $\dim (\ker P_{J})=b^{2}-1$, then $J$ is $C^{\infty}$-pure and full and 
	\begin{equation*}
	H^{2}_{dR}(X,\mathbb{R})=H^{+}_{J}\oplus H^{-}_{J}=\mathbb{R}\w\oplus H^{+}_{J,0}\oplus H^{-}_{J}.
	\end{equation*}
	where $$H^{+}_{J,0}=\{\mathfrak{a}\in H^{2}_{dR}(X,\R):\ there\ exists\ \a\in\mathcal{Z}^{2}\cap\Om^{+}_{J,0}\ such\ that\ \mathfrak{a}=[\a] \}.$$
\end{theorem}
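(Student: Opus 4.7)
The strategy is to identify $\ker P_J$ with the space of $d$-harmonic primitive $2$-forms and then split it along the pointwise $J$-(anti-)invariant decomposition. The Lefschetz decomposition $\Om^2 = \R\w\oplus P^2$ and the ellipticity of $P_J$ handle most of the bookkeeping.

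First I would verify $\ker P_J = \H^2_{dR}(X)\cap P^2$. If $\psi\in P^2$ satisfies $P_J\psi = 0$, then pointwise $\De_d\psi = \tfrac{1}{n}g(\De_d\psi,\w)\w$; pairing with $\psi$ and using $g(\w,\psi) = \La\psi = 0$ gives $g(\De_d\psi,\psi)\equiv 0$, whence $\|d\psi\|_{L^2}^2 + \|d^{\ast}\psi\|_{L^2}^2 = 0$ and $\psi$ is $d$-harmonic. Since $\w$ is itself $d$-harmonic ($d\w = 0$ and $\ast\w = \w^{n-1}/(n-1)!$ is also $d$-closed) while $\La\w = n\neq 0$, the sum $\R\w\oplus\ker P_J$ injects into $\H^2_{dR}(X)\cong H^2_{dR}(X,\R)$. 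This gives the general bound $\dim\ker P_J\leq b_2-1$, and the hypothesis forces $\H^2_{dR}(X) = \R\w\oplus\ker P_J$.

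I would next use the Hodge-$\ast$ formula for primitive $2$-forms---namely $\ast\b = \pm\w^{n-2}\wedge\b/(n-2)!$ with opposite signs on $\Om^+_{J,0}$ and $\Om^-_J$---to see that any $d$-closed primitive $2$-form is automatically $d$-harmonic. Consequently the natural maps $\H^2_{dR}\cap\Om^+_{J,0}\to H^+_{J,0}$ and $\H^2_{dR}\cap\Om^-_J\to H^-_J$ are isomorphisms. The remaining task is to split $\ker P_J$ along the pointwise $J$-decomposition: for $\psi = \psi^+ + \psi^-\in\ker P_J$, the conditions $d\psi = 0$ and $d^{\ast}\psi = 0$ combined with the $\ast$-formula translate into $\w^{n-2}\wedge d\psi^\pm = 0$, so each $d\psi^\pm$ is a primitive exact $3$-form.

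The principal obstacle is to exclude such nonzero primitive exact $3$-forms when $n\geq 3$, since harmonic primitive $2$-forms need not split individually into closed $J$-invariant and $J$-anti-invariant pieces in higher dimensions. The extremal hypothesis $\dim\ker P_J = b_2-1$ supplies the needed rigidity: it leaves no cohomological room beyond $\R[\w]$, $H^+_{J,0}$, and $H^-_J$, so any failure of the splitting would contradict the already-established dimension bounds. Once this splitting is secured, $\H^2_{dR}(X) = \R\w\oplus(\H^2_{dR}\cap\Om^+_{J,0})\oplus(\H^2_{dR}\cap\Om^-_J)$ descends to the claimed cohomology decomposition $H^2_{dR}(X,\R) = \R[\w]\oplus H^+_{J,0}\oplus H^-_J = H^+_J\oplus H^-_J$; in particular $J$ is $C^\infty$-pure and full.
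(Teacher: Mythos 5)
Two preliminary remarks. First, the paper does not prove this statement: it is quoted verbatim from \cite[Theorem 2.5]{TWZ2}, so there is no internal proof to compare against and your attempt has to stand on its own. Second, most of your setup is correct and is the natural one: the identification $\ker P_{J}=\mathcal{H}^{2}_{dR}(X)\cap P^{2}$ (pairing $\De_{d}\psi=\tfrac{1}{n}g(\De_{d}\psi,\w)\w$ with $\psi$ and integrating), the resulting bound $\dim\ker P_{J}\leq b_{2}-1$ and the consequence $\mathcal{H}^{2}_{dR}(X)=\R\w\oplus\ker P_{J}$ under the hypothesis, the Weil formula showing that a $d$-closed primitive form of \emph{pure type} is harmonic (hence $\mathcal{H}^{2}_{dR}\cap\Om^{+}_{J,0}\cong H^{+}_{J,0}$ and $\mathcal{Z}^{-}_{J}=\mathcal{H}^{-}_{J}\cong H^{-}_{J}$), and the reduction of the theorem to the claim that every $\psi=\psi^{+}+\psi^{-}\in\ker P_{J}$ satisfies $d\psi^{\pm}=0$, the obstruction being a primitive exact $3$-form.

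The gap is the last step, which is exactly the content of the theorem when $n\geq 3$, and your justification of it is not an argument. The ``already-established dimension bounds'' amount to the inclusion $(\mathcal{H}^{2}_{dR}\cap\Om^{+}_{J,0})\oplus\mathcal{H}^{-}_{J}\subseteq\ker P_{J}$, which only yields $h^{+}_{J,0}+h^{-}_{J}\leq\dim\ker P_{J}=b_{2}-1$. If the splitting fails, this inequality is simply strict; that contradicts nothing you have proved --- it is precisely the scenario in which $J$ fails to be $C^{\infty}$-full, and such failures do occur for almost complex structures in dimension $\geq 6$ (see the examples in \cite{FT,AT1}). So the extremal hypothesis must enter in some genuinely new way to kill the form $d\psi^{-}$, and that argument is missing. (In dimension four the step is free: $\w^{n-2}=1$, so $\w^{n-2}\wedge d\psi^{\pm}=0$ already gives $d\psi^{\pm}=0$; equivalently, the decomposition $P^{2}=\Om^{+}_{J,0}\oplus\Om^{-}_{J}$ coincides with the $\mp$-eigenspace splitting of $\ast_{g}$, with which $\De_{d}$ commutes. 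None of this survives to higher dimensions.) A related imprecision worth fixing: ``any $d$-closed primitive $2$-form is automatically $d$-harmonic'' is true only for pure-type primitive forms; for a mixed primitive form the two Weil signs do not cancel, which is exactly the difficulty you have left unresolved.
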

We then have a decomposition of the space of harmonic $2$-forms.
\begin{proposition}\label{P5}
	Suppose that $(X,g,J,\w)$ is a closed almost K\"{a}hler $2n$-manifold, $n\geq2$. If $\dim (\ker P_{J})=b_{2}-1$, then
	\begin{equation*}
	\mathcal{H}^{2}_{dR}(X,\mathbb{R})=\mathcal{H}^{1,1}_{J}(X,\mathbb{R})\oplus\mathcal{H}^{(2,0),(0,2)}_{J}(X,\mathbb{R}).
	\end{equation*}
\end{proposition}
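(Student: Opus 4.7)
The plan is to establish Proposition \ref{P5} by a dimension count against the Tan--Wang--Zhou decomposition $H^{2}_{dR}(X,\R)=\R\w\oplus H^{+}_{J,0}\oplus H^{-}_{J}$ available under the hypothesis $\dim\ker P_{J}=b_{2}-1$. Since each de Rham class has a unique harmonic representative, the natural restriction maps
$$\mathcal{H}^{1,1}_{J}(X,\R)\lra H^{+}_{J}(X),\qquad \mathcal{H}^{(2,0),(0,2)}_{J}(X,\R)\lra H^{-}_{J}(X)$$
are automatically injective, and $\mathcal{H}^{1,1}_{J}\cap\mathcal{H}^{(2,0),(0,2)}_{J}=\{0\}$ by the pointwise type splitting $\Om^{2}=\Om^{+}_{J}\oplus\Om^{-}_{J}$. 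Since $h^{+}_{J}+h^{-}_{J}=b_{2}=\dim\mathcal{H}^{2}_{dR}$, it suffices to produce, for every class in $H^{+}_{J}$ and in $H^{-}_{J}$, a harmonic representative of the appropriate type.

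The key lemma I would prove first is the following: on a closed almost K\"ahler manifold, every $d$-closed form $\a\in\Om^{+}_{J,0}$ and every $d$-closed form $\a\in\Om^{-}_{J}$ is automatically $d^{\ast}$-closed, hence harmonic. Both spaces consist of primitive $2$-forms (since $\La$ has bidegree $(-1,-1)$ and so annihilates $(2,0)+(0,2)$-forms), and the Weil identity yields $\ast\a=\pm\tfrac{1}{(n-2)!}\,\w^{n-2}\wedge\a$, with the sign determined by the bidegree. Combined with $d\w=0$ and $d\a=0$, this gives $d\ast\a=0$, whence $d^{\ast}\a=-\ast d\ast\a=0$.

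Given this lemma, surjectivity of both restriction maps is immediate. Any class in $H^{+}_{J}$ is of the form $c\,[\w]+[\gamma]$ with $\gamma\in\mathcal{Z}^{2}\cap\Om^{+}_{J,0}$; then $c\w+\gamma$ is $J$-invariant and harmonic, since $\w$ is harmonic and the lemma applies to $\gamma$. Similarly every class in $H^{-}_{J}$ admits a $d$-closed $J$-anti-invariant representative which the lemma upgrades to a harmonic one. Therefore $\dim\mathcal{H}^{1,1}_{J}(X,\R)=h^{+}_{J}$ and $\dim\mathcal{H}^{(2,0),(0,2)}_{J}(X,\R)=h^{-}_{J}$, and the dimension count $h^{+}_{J}+h^{-}_{J}=b_{2}=\dim\mathcal{H}^{2}_{dR}$ forces the inclusion $\mathcal{H}^{1,1}_{J}\oplus\mathcal{H}^{(2,0),(0,2)}_{J}\subset\mathcal{H}^{2}_{dR}$ to be an equality.

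The main technical point is the Weil formula calculation for primitive $2$-forms, keeping track of the signs in the $(1,1)$ and $(2,0)+(0,2)$ cases; everything else is Hodge-theoretic bookkeeping combined with the Tan--Wang--Zhou structure theorem, and no further information about $\ker P_{J}$ is needed beyond what the latter already provides.
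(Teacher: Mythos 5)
Your proof is correct and follows essentially the same route as the paper's: both rest on the Weil identity $\ast\a=\pm\frac{1}{(n-2)!}L^{n-2}\a$ for primitive $2$-forms of pure type (so that $d$-closedness forces $d^{\ast}$-closedness), combined with the Tan--Wang--Zhou decomposition $H^{2}_{dR}=\mathbb{R}\w\oplus H^{+}_{J,0}\oplus H^{-}_{J}$ and a dimension count. The only difference is organizational: the paper starts from a harmonic $J$-invariant form and shows its primitive part is closed, whereas you start from closed primitive pure-type forms and show they are harmonic; the underlying computation is identical.
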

\begin{proof}
	Let $\a\in\mathcal{H}^{1,1}_{J}(X,\mathbb{R})$. We denote $\a=f\w+\a^{1,1}_{0}$, where $f$ is a function on $X$, $\a_{0}^{1,1}\in P^{1,1}$, i.e., $\La\a^{1,1}_{0}=0$.  Following Weil formula,
	$$\ast\gamma^{+}_{J}=\ast(f\w+\a_{0}^{1,1})=f\wedge\frac{\w^{n-1}}{(n-1)!}-\a^{1,1}_{0}\wedge\frac{\w^{n-2}}{(n-2)!}.$$
	Noting that $d\gamma^{+}_{J}=0$, i.e., $df\wedge\w+d\a^{1,1}_{0}=0$, we then have
	$$0=L^{n-1}(df)+L^{n-2}(d\a^{1,1}_{0}).$$
	Noting that $d\ast\gamma^{+}_{J}=0$. We also have
	$$0=\frac{1}{(n-1)!}L^{n-1}(df)-\frac{1}{(n-2)!}L^{n-2}(d\a^{1,1}_{0}).$$
	Combining preceding identities, it implies that
	$$L^{n-1}(df)=L^{n-2}(d\a^{1,1}_{0})=0.$$
	Since the map $L^{n-k}:\Om^{k}\rightarrow\Om^{2n-k}$ is bijective for $k\leq n$, $df=0$ and hence $d\a^{1,1}_{0}=0$. We then have $$\mathcal{H}^{1,1}_{J}(X,\mathbb{R})=\mathbb{R}\w\oplus\mathcal{H}^{+}_{J,0}\cong\mathbb{R}\w\oplus{H}^{+}_{J,0}.$$
	Therefore, 
	$$\mathcal{H}^{1,1}_{J}\oplus\mathcal{H}^{(2,0),(0,2)}_{J}\subset\mathcal{H}^{2}_{dR}\cong H^{2}_{dR}=\mathbb{R}\w\oplus H^{+}_{J,0}\oplus H^{-}_{J}\cong\mathcal{H}^{1,1}_{J}\oplus\mathcal{H}^{(2,0),(0,2)}_{J}.$$
\end{proof}
We denote by 
$$\mathcal{H}^{\pm}_{J}(X):=\{\a\in\Om^{\pm}_{J}(X):\De_{d}\a=0 \}$$
the spaces of harmonic $J$-invariant forms and $J$-anti-invariant forms.
\begin{proposition}\label{P1}
Let $(X,J)$ be a $2n$-dimensional closed almost K\"{a}hler manifold, $n\geq2$. We have a decomposition of the space of harmonic $2$-forms as follows 
$$\mathcal{H}_{dR}^{2}(X)=\mathcal{H}^{+}_{J}(X)\oplus\mathcal{H}^{-}_{J}(X),$$
if and only if $\dim\ker{P_{J}}=b_{2}-1$.
\end{proposition}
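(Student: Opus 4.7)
My plan is to prove the two implications separately, using Proposition~\ref{P5} for the forward direction and reducing the reverse direction to identifying $\ker P_J$ with the space of harmonic primitive $2$-forms.

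If $\dim\ker P_J = b_2 - 1$, Proposition~\ref{P5} yields
$$\mathcal{H}^2_{dR}(X,\R) = \mathcal{H}^{1,1}_J(X,\R) \oplus \mathcal{H}^{(2,0),(0,2)}_J(X,\R),$$
and since $\Om^+_J = \Om^{1,1}_J(X,\R)$ and $\Om^-_J = \Om^{(2,0),(0,2)}_J(X,\R)$ by construction, this is exactly $\mathcal{H}^2_{dR}(X) = \mathcal{H}^+_J(X) \oplus \mathcal{H}^-_J(X)$.

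For the reverse direction, the key step is the characterization $\ker P_J = \{\psi \in P^2 : \De_d\psi = 0\}$. Given $\psi \in P^2$ with $P_J\psi = 0$, one has $\De_d\psi = c(x)\w$ with $c(x) = \tfrac{1}{n} g(\De_d\psi,\w)$; since $\psi$ is primitive, $g(\w,\psi) = 0$ pointwise, so pairing $\De_d\psi$ with $\psi$ and integrating,
$$\int_X(|d\psi|^2 + |d^{\ast}\psi|^2)\,dvol = \int_X g(\De_d\psi,\psi)\,dvol = \int_X c(x)\, g(\w,\psi)\,dvol = 0,$$
which forces $d\psi = d^{\ast}\psi = 0$; the reverse inclusion is trivial from $\De_d\psi = 0$. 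A dimension count then finishes the argument. Because $\w$ is harmonic but not primitive, the injection $\ker P_J \hookrightarrow \mathcal{H}^2_{dR}(X)$ has image disjoint from $\R\w$, yielding $\dim\ker P_J \leq b_2 - 1$ unconditionally. The Weil-formula computation in the proof of Proposition~\ref{P5} (which in fact does not require the hypothesis there) shows that every harmonic $(1,1)$-form splits as $c\w + \a_0$ with $c \in \R$ and $\a_0$ closed primitive $(1,1)$, so $\mathcal{H}^+_J = \R\w \oplus \mathcal{H}^+_{J,0}$; moreover $\mathcal{H}^-_J \subset P^2$ since anti-invariant $2$-forms are automatically primitive ($\w$ has type $(1,1)$, so $\La$ vanishes on $\Om^{(2,0),(0,2)}_J$). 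Both $\mathcal{H}^+_{J,0}$ and $\mathcal{H}^-_J$ therefore sit inside $\ker P_J$ with direct sum by type, so the hypothesis $\mathcal{H}^2_{dR}(X) = \mathcal{H}^+_J(X) \oplus \mathcal{H}^-_J(X)$ gives
$$b_2 = 1 + \dim\mathcal{H}^+_{J,0} + \dim\mathcal{H}^-_J \leq 1 + \dim\ker P_J \leq b_2,$$
forcing $\dim\ker P_J = b_2 - 1$.

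The main obstacle I foresee is the identification of $\ker P_J$ with the space of harmonic primitive $2$-forms; once the integration identity above is in place, the remainder is a straightforward count using the Lefschetz decomposition together with the observation that each class in $H^+_{J,0}$ and $H^-_J$ is realized by a closed primitive representative, which is automatically harmonic.
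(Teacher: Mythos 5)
Your proof is correct and follows essentially the same route as the paper: the forward implication via Proposition~\ref{P5}, and the reverse implication via the identification $\mathcal{H}^{+}_{J}=\mathbb{R}\w\oplus\mathcal{H}^{+}_{J,0}$ together with a count of $\ker P_{J}$. You are in fact more complete than the paper, which asserts $\dim\ker P_{J}=\dim\mathcal{H}^{+}_{J,0}+\dim\mathcal{H}^{-}_{J}$ without justification; your integration-by-parts characterization of $\ker P_{J}$ as the harmonic primitive $2$-forms, and the resulting unconditional bound $\dim\ker P_{J}\leq b_{2}-1$ used in the sandwich $b_{2}\leq 1+\dim\ker P_{J}\leq b_{2}$, supply exactly the missing step.
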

\begin{proof}
By hypothesis on the space of harmonic $2$-forms, we then have $$\mathcal{H}_{dR}^{2}(X)=\mathbb{R}\w\oplus\mathcal{H}^{+}_{J,0}(X)\oplus\mathcal{H}^{-}_{J}(X).$$	
Therefore $\dim\ker{P}_{J}=\dim\mathcal{H}^{+}_{J,0}(X)+\dim\mathcal{H}^{-}_{J}(X)=b_{2}-1$.
\end{proof}
\begin{proof}[\textbf{Proof of Theorem \ref{T1}}]
The conclusions follow from Proposition \ref{P1}.	
\end{proof}
\begin{remark} By definition $\De_{s}^{2}:=2(h^{2}_{d+d^{\La}}-b_{2})$ and so on a compact almost-K\"{a}hler manifold with $J$ satisfies  $\dim\ker{P_{J}}=b^{2}-1$ we just proved, with a different technique, that $\De_{s}^{2}\geq0$.
\end{remark}
\begin{proof}[\textbf{Proof of Theorem \ref{T2}}]
{$(1)\Leftrightarrow(2)$\\
The conclusion follows Proposition \ref{P1}.}\\
$(1)\Longrightarrow (3)$\\
Let $\a\in\mathcal{H}^{2}(X)$, namely $d\a=0$ and $d\ast\a=0$.  By hypothesis, $\mathcal{H}^{2}_{dR}(X)=\mathbb{R}\w\oplus\mathcal{H}^{-}_{g}(X)\oplus\mathcal{H}^{-}_{J}(X)$, then
$$\a=c\w+\a^{-}_{g}+\gamma^{-}_{J},$$
where $c$ is a constant, $\a^{-}_{g}\in\mathcal{H}^{-}_{g}(X)$ and $\gamma^{-}_{J}\in\mathcal{H}^{-}_{J}(X)$. We only need to show that $d^{\La}\a=0$ since $(dd^{\La})\a=d^{\La_{\ast}}d^{\ast}\a=0$. In fact $d^{\La}\a=d\La(c\w)=0$.\\
$(3)\Rightarrow(2)$\\
Let $\a\in\mathcal{H}^{+}_{g}(X,\mathbb{R})$, namely $\ast\a=\a$ and $d\a=0$. Following {the idea of \cite[Theorem 4.2]{TT}}, if $\mathcal{H}^{2}_{dR}\subset\mathcal{H}^{2}_{d+d^{\La}}$, where $$\mathcal{H}^{2}_{d+d^{\La}}=\ker{d}\cap\ker{d^{\La}}\cap\ker(dd^{\La})^{\ast}\cap\Om^{2},$$
we then have 
$$0=d^{\La}\a=-\ast J^{-1}dJ\ast\a=-\ast J^{-1}dJ\a.$$ 
We consider the unique decomposition
$$\a=f\w+\gamma^{-}_{J},$$ 
where $f$ is a function and $\gamma^{-}_{J}$ is a $J$-anti-invariant form, hence
$$J\a=f\w-\gamma^{-}_{J}.$$
Noting that 
$$0=d\a=df\wedge\w+d\gamma^{-}_{J}.$$
and
$$0=dJ\a=df\wedge\w-d\gamma^{-}_{J}.$$
Thus $df\wedge\w=0$, that is $df=0$ and $d\gamma^{-}_{J}=0$. Hence $$\mathcal{H}_{g}^{+}(X,\mathbb{R})\cong \mathbb{R}\w\oplus\mathcal{H}^{-}_{J}\cong \mathbb{R}\w\oplus H^{-}_{J},$$
i.e., $h^{-}_{J}=b^{+}_{2}-1$. Since $h^{+}_{J}+h^{-}_{J}=b_{2}$ and $b^{+}_{2}+b^{-}_{2}=b_{2}$, we have {$h^{+}_{J}=b^{-}_{2}+1$}.
\end{proof}
\begin{remark}
For any $\a\in\Om^{2}(X)$, we write 
$$\a=f\w+\a_{0}^{+}+\a_{J}^{-},$$
where $f$ is a function on $X$, $\a_{0}^{+}\in\Om^{+}_{J,0}(X)$ and $\a_{J}^{+}\in\Om^{-}_{J}(X)$.

In higher dimensional case, if $\mathcal{H}^{2}_{dR}\subset\mathcal{H}^{2}_{d+d^{\La}}$, we obtain that 
$$0=d^{\La}\a=[d,\La]\a=df.$$
Noting that
$$\ast\a=-\frac{1}{(n-1)!}L^{n-1}f-\frac{1}{(n-1)!}L^{n-2}\a^{+}_{0}+\frac{1}{(n-2)!}L^{n-2}\a^{-}_{J}.$$
Therefore, 
$$0=d\a=d\a^{+}_{0}+d\a^{-}_{J}$$
and
$$0=d\ast \a=-\frac{1}{(n-1)!}L^{n-2}d\a^{+}_{0}+\frac{1}{(n-2)!}L^{n-2}d\a^{-}_{J}.$$
Hence, we get
$$L^{n-2}d\a^{+}_{0}=L^{n-2}d\a^{-}_{J}=0,$$
i.e., $d^{\ast}\a^{+}_{0}=d^{\ast}\a^{-}_{J}=0$.
\end{remark}
Let $(X^{2n}, J)$ be a compact almost-K\"{a}hler manifold and suppose that $J$ satisfies $\dim\ker P_{J}=b_{2}-1$. In view of Theorem \ref{T1} we denote the with $V$ the finite dimensional vector spaces of $\mathcal{H}^{2}_{dR}(X)$ such that
$$\mathcal{H}^{2}_{dR}(X)\oplus V=\mathcal{H}^{2}_{d+d^{\La}}(X).$$
It follows that 
$$\dim V= h^{2}_{d+d^{\La}}-b_{2}=\frac{1}{2}\De_{s}^{2},$$
namely, $\frac{1}{2}\De_{s}^{2}$ can be seen as the dimension of a vector subspace of the space of harmonic forms $H^{2}_{dR}(X)$. Now we describe explicitly the space $V$.
\begin{proposition}
Let $(X^{2n}, J)$ be a compact almost-K\"{a}hler manifold and suppose that $J$ satisfies $\dim\ker P_{J}=b_{2}-1$. Then,
$$V=\mathcal{H}^{2}_{d+d^{\La}}(X)\cap{\rm{Im}}d.$$
\end{proposition}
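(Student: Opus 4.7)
The plan is to leverage the standard Hodge decomposition of $\Om^{2}(X)$ together with the inclusion $\mathcal{H}^{2}_{dR}(X)\subset\mathcal{H}^{2}_{d+d^{\La}}(X)$ guaranteed by Theorem \ref{T1} under the hypothesis $\dim\ker P_{J}=b_{2}-1$. The key guiding intuition is that $V$ ought to be the $L^{2}$-orthogonal complement of $\mathcal{H}^{2}_{dR}(X)$ inside $\mathcal{H}^{2}_{d+d^{\La}}(X)$, and since the Hodge theorem splits $\ker d$ orthogonally as $\mathcal{H}^{2}_{dR}\oplus{\rm{Im}}\,d$, this complement should coincide with $\mathcal{H}^{2}_{d+d^{\La}}(X)\cap{\rm{Im}}\,d$.

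First I would recall the $L^{2}$-orthogonal decomposition
$$\Om^{2}(X)=\mathcal{H}^{2}_{dR}(X)\oplus{\rm{Im}}\,d\oplus{\rm{Im}}\,d^{\ast},$$
and observe that ${\rm{Im}}\,d^{\ast}$ sits inside the orthogonal complement of $\ker d$ (via the identity $\langle d^{\ast}\gamma,\eta\rangle=\langle\gamma,d\eta\rangle$), so that $\ker d=\mathcal{H}^{2}_{dR}(X)\oplus{\rm{Im}}\,d$. Since $\mathcal{H}^{2}_{d+d^{\La}}(X)\subset\ker d$, every $\a\in\mathcal{H}^{2}_{d+d^{\La}}(X)$ then decomposes uniquely as $\a=\a_{H}+d\b$ with $\a_{H}\in\mathcal{H}^{2}_{dR}(X)$ and $d\b\in{\rm{Im}}\,d$.

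Next I would verify that both summands stay inside $\mathcal{H}^{2}_{d+d^{\La}}(X)$. Theorem \ref{T1} gives $\a_{H}\in\mathcal{H}^{2}_{dR}(X)\subset\mathcal{H}^{2}_{d+d^{\La}}(X)$, so in particular $d\a_{H}=0$, $d^{\La}\a_{H}=0$ and $(dd^{\La})^{\ast}\a_{H}=0$; consequently $d\b=\a-\a_{H}$ inherits the same three annihilations from $\a$ and $\a_{H}$, so $d\b\in\mathcal{H}^{2}_{d+d^{\La}}(X)\cap{\rm{Im}}\,d$. Combined with the orthogonality $\mathcal{H}^{2}_{dR}\perp{\rm{Im}}\,d$, this yields the $L^{2}$-orthogonal direct sum
$$\mathcal{H}^{2}_{d+d^{\La}}(X)=\mathcal{H}^{2}_{dR}(X)\oplus\bigl(\mathcal{H}^{2}_{d+d^{\La}}(X)\cap{\rm{Im}}\,d\bigr),$$
identifying $V$ with the claimed subspace. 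There is no substantial obstacle in this argument; the only point at which the hypothesis really enters is the appeal to Theorem \ref{T1} to guarantee that the de Rham harmonic component $\a_{H}$ of $\a$ itself satisfies $d^{\La}\a_{H}=0$ and $(dd^{\La})^{\ast}\a_{H}=0$, which is precisely what keeps the splitting inside $\mathcal{H}^{2}_{d+d^{\La}}(X)$.
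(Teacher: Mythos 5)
Your proof is correct and follows essentially the same route as the paper's: both use the Hodge-theoretic splitting $\ker d=\mathcal{H}^{2}_{dR}(X)\oplus d(\Om^{1}(X))$ together with the inclusion $\mathcal{H}^{2}_{d+d^{\La}}(X)\subset\ker d$ and Theorem \ref{T1} to identify $V$ with $\mathcal{H}^{2}_{d+d^{\La}}(X)\cap{\rm{Im}}\,d$. You are in fact slightly more careful than the paper, since you explicitly verify that the exact component $d\b=\a-\a_{H}$ remains in $\mathcal{H}^{2}_{d+d^{\La}}(X)$, a step the paper leaves implicit.
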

\begin{proof}
By definition of $V$
$$\mathcal{H}^{2}_{dR}(X)\oplus V=\mathcal{H}^{2}_{d+d^{\La}}(X),$$
and by Hodge theory one has that
$$\mathcal{H}^{2}_{d+d^{\La}}\subset\Om^{2}\cap\ker d=\mathcal{H}^{2}_{dR}\oplus d(\Om^{1}(X)).$$
Hence, one get
$$V=\mathcal{H}^{2}_{d+d^{\La}}(X)\cap d(\Om^{1}(X))$$
concluding the proof.
\end{proof}
\begin{remark}
There are some results on other degrees cases. In \cite[Theorem 4.3]{TT2}, it is proved that, if $(X^{2n},\w)$ is a $2n$-dimensional compact symplectic manifold (we don't need to suppose that $X^{2n}$ is almost K\"{a}herian), then $\De_{s}^{1}=0$.
\end{remark}
\section*{Acknowledgements}
We would like to thank the anonymous referee for careful reading of my manuscript and helpful comments. We would like to thank Prof. Tomassini for stimulating email discussions. This work was supported in part by NSF of China (11801539) and the Fundamental Research Funds of the Central Universities (WK3470000019), the USTC Research Funds of the Double First-Class Initiative (YD3470002002).

\bigskip
\footnotesize

\end{document}